\theoremstyle{plain}
\newtheorem{theorem}{Theorem}[section]
\newtheorem{prop}[theorem]{Proposition}
\newtheorem{cor}[theorem]{Corollary}
\theoremstyle{definition}
\newtheorem{definition}[theorem]{Definition}
\newtheorem{remark}[theorem]{Remark}
\newtheorem{example}[theorem]{Example}
\newtheorem*{acknowledgement}{\textnormal{\textbf{Acknowledgements}}}
\begin{document}

\title[Extreme contractions on finite-dimensional polygonal spaces]{Extreme contractions on finite-dimensional polygonal Banach spaces}
\author[Debmalya Sain, Anubhab Ray and  Kallol Paul ]{ Debmalya Sain, Anubhab Ray and  Kallol Paul}

\newcommand{\acr}{\newline\indent}

\address[Sain]{Department of Mathematics\\ Indian Institute of Science\\ Bengaluru 560012\\ Karnataka \\India\\ }
\email{saindebmalya@gmail.com}

\address[Ray]{Department of Mathematics\\ Jadavpur University\\ Kolkata 700032\\ West Bengal\\ INDIA}
\email{anubhab.jumath@gmail.com}

\address[Paul]{Department of Mathematics\\ Jadavpur University\\ Kolkata 700032\\ West Bengal\\ INDIA}
\email{kalloldada@gmail.com}

\thanks{The research of Dr. Debmalya Sain is sponsored by Dr. D. S. Kothari Postdoctoral Fellowship, under the mentorship of Professor Gadadhar Misra. Dr. Debmalya Sain feels elated to acknowledge the extraordinary presence of Mr. Anirban Dey, his beloved brother-in-arms, in his everyday life! The second author would like to thank DST, Govt. of India, for the financial support in the form of doctoral fellowship. The research of third author  is supported by project MATRICS  of DST, Govt. of India.
} 

\subjclass[2010]{Primary 46B20, Secondary 47L05}
\keywords{extreme contractions; polygonal Banach spaces; strict convexity; Hilbert spaces.}

\begin{abstract}
We explore extreme contractions on finite-dimensional polygonal Banach spaces, from the point of view of   attainment of norm of a linear operator.  We prove that if $ X $ is an $ n- $dimensional polygonal Banach space and $ Y $ is any normed linear space and $ T \in L(X,Y) $ is an extreme contraction, then $ T $ attains norm at $ n $ linearly independent extreme points of $ B_{X}. $ Moreover, if $ T $ attains norm at $ n $ linearly independent extreme points $ x_1, x_2, \ldots, x_n $ of $ B_X $ and does not attain norm at any other extreme point of $ B_X, $ then each $ Tx_i $ is an extreme point of $ B_Y.$ We completely characterize extreme contractions between a finite-dimensional polygonal Banach space and a strictly convex normed linear space. We introduce L-P property for a pair of Banach spaces and show that it has natural connections with our present study. We also prove that for any strictly convex Banach space $ X $ and any finite-dimensional polygonal Banach space $ Y, $ the pair $ (X,Y) $ does not have L-P property. Finally, we obtain a characterization of Hilbert spaces among strictly convex Banach spaces in terms of L-P property. 

\end{abstract}

\maketitle

\section{Introduction.}
The purpose of the present paper is to study the structure and properties of extreme contractions on finite-dimensional real polygonal Banach spaces. Characterization of extreme contractions between Banach spaces is a rich and intriguing area of research. It is worth mentioning that extreme contractions on a Hilbert space are well-understood \cite{Ga, K, N, S}. However, we are far from completely describing extreme contractions between general Banach spaces, although several mathematicians have studied the problem for particular Banach spaces \cite{G, I, Ki, Sh, Sha}. Very recently, a complete characterization of extreme contractions between two-dimensional strictly convex and smooth real Banach spaces has been obtained in \cite{SPM}. To the best of our knowledge, the characterization problem remains unsolved for higher dimensional Banach spaces. Lindenstrauss and Perles carried out a detailed investigation of extreme contractions on a finite-dimensional Banach space in \cite{LP}. It follows from their seminal work that extreme contractions on a finite-dimensional Banach space may have additional properties if the unit sphere of the space is a polytope. Motivated by this observation, we further explore extreme contractions between finite-dimensional polygonal Banach spaces. Without further ado, let us establish the relevant notations and terminologies to be used throughout the paper. \\

In this paper, letters  $X,~Y$  denote real Banach spaces. Let $ B_{X}=\{ x\in X ~:~\|x\| \leq 1 \} $ and
$ S_{ X }=\{ x\in X  ~:~\|x\|=1 \} $ denote the unit ball and the unit sphere of $X$ respectively and let $ L(X,Y) $ be the Banach space of all bounded linear operators from $ X $ to $ Y, $ endowed with the usual operator norm. Let $E_X$ be the collection of all extreme points of the unit ball $B_X.$ We say that a finite-dimensional Banach space $ X $ is a polygonal Banach space if $ S_{X} $ is a polytope, or equivalently, if $ B_{X} $ contains only finitely many extreme points. An operator $ T \in L(X,Y) $ is said to be an extreme contraction if $ T $ is an extreme point of the unit ball $ B_{L(X,Y)} $. We would like to note that extreme contractions are norm one elements of the Banach space $ L(X,Y) $ but the converse is not necessarily true. For a bounded linear operator $ T \in L(X,Y), $ let $ M_T $ be the collection of all unit vectors in $ X $ at which $ T $ attains norm, i.e., 
\[ M_T = \{  x \in S_{X} : \| Tx \| = \| T \|  \}. \]

As we will see in due course of time, the notion of the norm attainment set $ M_T, $ corresponding to a linear operator $ T $ between the Banach spaces $ X $ and $ Y, $ plays a very important role in determining whether $ T $ is an extreme contraction or not. As a matter of fact, we prove that if $ X $ is an $ n- $dimensional polygonal Banach space and $ Y $ is any normed linear space, then $ T \in L(X,Y) $ is an extreme contraction implies that  $ span(M_T \cap E_X) = X. $ Moreover, if $ M_T \cap E_X $ contains  exactly $2n $ elements then $ T( M_T \cap E_X ) \subset E_Y.$ Indeed, this novel connection between extreme contractions and the corresponding norm attainment set is a major highlight of the present paper. As an application of this result, we completely characterize extreme contractions from a finite-dimensional polygonal Banach space to any strictly convex normed linear space. We would like to note that extreme contractions in $ L(l_{1},Y), $ where $ Y $ is any Banach space, have been completely characterized by Sharir in \cite{Sha}. In this paper, we show that extreme contractions in $ L(l_{1}^{n},Y) $ can be characterized using our approach. We also illustrate that the nature of extreme contractions can change dramatically if we consider the domain space to be something other than finite-dimensional polygonal Banach spaces.\\

In \cite{LP}, Lindenstrauss and Perles studied the set of extreme contractions in $ L(X,X), $ for a finite-dimensional Banach space $ X $. One of the main results presented in \cite{LP} is the following:
\begin{theorem}
If $ X $ is a finite-dimensional Banach space, then the following statements are equivalent:\\
$ (1) $ $ T \in E_{L(X,X)},~~ x \in  E_X \Rightarrow Tx \in E_X. $\\
$ (2) $ $ T_1,T_2 \in E_{L(X,X)} \Rightarrow T_1 \circ T_2 \in E_{L(X,X)}. $\\
$ (3) $ $ \{T_i\}_{i=1}^{m} \subseteq E_{L(X,X)} \Rightarrow \|T_1 \circ \ldots \circ T_m \|=1, $ for all $ m. $
\end{theorem}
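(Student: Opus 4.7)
The plan is to prove the cycle $(1) \Rightarrow (2) \Rightarrow (3) \Rightarrow (1)$. The first two implications are routine; the real obstacle is $(3) \Rightarrow (1)$.

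For $(1) \Rightarrow (2)$: given $T_1, T_2 \in E_{L(X,X)}$, iterating $(1)$ shows $(T_1 \circ T_2)(x) \in E_X$ for every $x \in E_X$, so in particular $\|T_1 \circ T_2\| = 1$. Now suppose $T_1 \circ T_2 = \tfrac{1}{2}(A+B)$ with $A,B \in B_{L(X,X)}$. For each $x \in E_X$ the point $(T_1 \circ T_2)(x) \in E_X$ is the midpoint of $Ax, Bx \in B_X$, which forces $Ax = Bx = (T_1 \circ T_2)(x)$. Since $X$ is finite-dimensional, $B_X = \mathrm{conv}(E_X)$ and hence $\mathrm{span}(E_X) = X$; by linearity $A = B = T_1 \circ T_2$ everywhere, so $T_1 \circ T_2 \in E_{L(X,X)}$. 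The implication $(2) \Rightarrow (3)$ is then an immediate induction on $m$, since $(2)$ yields $T_1 \circ \cdots \circ T_m \in E_{L(X,X)}$ and every extreme point of a Banach unit ball has norm one.

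For $(3) \Rightarrow (1)$ I would argue by contradiction: assuming some $T \in E_{L(X,X)}$ and $x_0 \in E_X$ satisfy $T x_0 \notin E_X$, the goal is to manufacture a finite composition $T_1 \circ \cdots \circ T_m$ of extreme contractions with norm strictly less than $1$. The first natural attempt is to inspect the powers $T^n$: in finite dimensions they lie in the compact set $B_{L(X,X)}$ and must cluster. If some $\|T^n\| < 1$ we are done; otherwise the spectral radius $\rho(T)$ equals $1$, a cluster point $S$ of $\{T^n\}$ has norm one, and the closed semigroup generated by $T$ contains a norm-one idempotent $P$. One would then try to combine $P$, $T$, and auxiliary extreme contractions chosen to exploit the failure of extremality at $T x_0$ (for instance, by separating the two distinct points of $B_X$ whose midpoint equals $T x_0$) to build the shrinking product.

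The principal obstacle is that hypothesis $(3)$ is remarkably weak: it only asserts that all products have norm exactly one, not that they are extreme. So the argument must convert the qualitative statement ``$T x_0$ is non-extreme'' into a strictly quantitative inequality for some product norm. I anticipate that this cannot be accomplished with iterates of $T$ alone and will require a careful construction of auxiliary extreme contractions, relying essentially on the compactness of $B_{L(X,X)}$ and on semigroup arguments that are available only because $X$ is finite-dimensional.
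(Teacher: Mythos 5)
Your implications $(1)\Rightarrow(2)$ and $(2)\Rightarrow(3)$ are correct, and the midpoint argument you use in fact shows slightly more: any $S\in L(X,X)$ with $\|S\|\leq 1$ and $S(E_X)\subseteq E_X$ is an extreme contraction, since $S=\frac{1}{2}(A+B)$ with $A,B\in B_{L(X,X)}$ forces $Ax=Bx=Sx$ for every $x\in E_X$, and $E_X$ spans $X$ in finite dimensions; applying this to $S=T_1\circ T_2$ gives $(1)\Rightarrow(2)$, and $(2)\Rightarrow(3)$ is the induction you describe together with the fact that extreme points of the unit ball have norm one. Note also that the paper itself contains no proof of this statement: it is quoted as background from Lindenstrauss and Perles \cite{LP}, so the only meaningful comparison is with their original argument.

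The proposal does not, however, prove $(3)\Rightarrow(1)$, and that implication is the entire substance of the theorem. What you offer there is a plan, not an argument: assuming $T\in E_{L(X,X)}$ and $x_0\in E_X$ with $Tx_0\notin E_X$, you must exhibit a finite product of extreme contractions of norm strictly less than $1$, and no such product is constructed. The route through powers of $T$ alone cannot succeed, as you yourself concede: the cluster-point and idempotent analysis only extracts information in the case $\|T^n\|=1$ for all $n$, and nothing in it engages the hypothesis that $Tx_0$ fails to be extreme --- an extreme contraction can, for instance, be idempotent (so all its powers have norm one) independently of where it sends extreme points. The decisive step, converting ``$Tx_0$ is a proper convex combination of two distinct points of $B_X$'' into a quantitative norm drop for some product involving suitably chosen auxiliary extreme contractions, is exactly where the difficulty lies, and it is left entirely open (``one would then try to combine\ldots''). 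Until that construction is supplied, the cycle $(1)\Rightarrow(2)\Rightarrow(3)\Rightarrow(1)$ is broken at its only nontrivial link; in \cite{LP} the corresponding implication requires a substantially more elaborate argument than iterating a single operator, and reproducing or replacing it is what a complete proof would have to do.
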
 

Furthermore, they also proved in \cite{LP} that if $ X $ is a Banach space of dimension strictly less than $ 5 $ then $ X $ has the Properties $ (1), $ $ (2) $ and $ (3), $ as mentioned in the above theorem, if and only if either of the following is true:\\
\noindent $ (i) $ $ X $ is an inner product space.\\
\noindent $ (ii) $ $ B_{X} $ is a polytope with the property that for every facet (i.e., maximal proper face) $ K $ of $ B_{X}, $ $ B_{X} $ is the convex hull of $ K \bigcup (-K). $\\

In view of the results obtained in \cite{LP}, it seems natural to introduce the following definition in the study of extreme contractions between Banach spaces: 

\begin{definition}
Let $ X,~Y $ be Banach spaces. We say that the pair $ (X,Y) $ has L-P (abbreviated form of Lindenstrauss-Perles) property if a norm one bounded linear operator $ T \in L(X,Y) $ is an extreme contraction if and only if $ T(E_X) \subseteq E_Y. $
\end{definition}

We would like to remark that although the study conducted in \cite{LP} was only for the case $ X=Y, $ we have consciously formulated the above mentioned definition in a broader context, by not imposing the restriction that the domain space and the range space must be identical. In order to illustrate that our definition is meaningful, we provide examples of Banach spaces $ X,~Y $ such that $ X \neq Y $ and the pair $ (X,Y) $ has L-P property. As a matter of fact, we observe that the pair $ (l_{\infty}^{3},l_{1}^{3}) $ has L-P property. It is apparent that our study in this paper is motivated by the investigations carried out in \cite{LP}. We further give examples of finite-dimensional polygonal Banach spaces $ X,~Y $ such that the pair $ (X,Y) $ does not have L-P property.  We prove that if $ X $ is a strictly convex Banach space and $ Y $ is a finite-dimensional polygonal Banach space, then the pair $ (X,Y) $ does not have L-P property. We end the present paper with a characterization of Hilbert spaces among strictly convex Banach spaces in terms of L-P property.

\section{ Main Results.}
Let us first make note of the following easy proposition:

\begin{prop}\label{proposition:bounded}
Let $ X $ be an $ n- $dimensional Banach space and let $ A \subseteq X $ be a bounded set. Suppose 
$ \{ x_1, x_2, \ldots , x_n\}$ is a basis of $ X. $ Then the following set $ S= \{ |\alpha_i| : z=\sum\limits_{i=1}^{n} \alpha_i x_i~~ \mbox{and}~~ z \in A \} $ is bounded.
\end{prop}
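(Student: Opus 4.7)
The plan is to reduce the statement to the standard fact that all norms on a finite-dimensional vector space are equivalent, or equivalently, that every linear functional on a finite-dimensional normed space is bounded.

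First I would define on $X$ an auxiliary norm using the basis $\{x_1, x_2, \ldots, x_n\}$, for instance
\[
\|z\|_{\infty} = \max_{1 \le i \le n} |\alpha_i|, \qquad \text{where } z = \sum_{i=1}^n \alpha_i x_i.
\]
The coordinates $\alpha_i$ are well defined because $\{x_1, \ldots, x_n\}$ is a basis, so $\|\cdot\|_{\infty}$ is indeed a norm. Since $X$ is finite-dimensional, $\|\cdot\|_{\infty}$ is equivalent to the original norm $\|\cdot\|$ on $X$; in particular there exists a constant $c > 0$ such that $\|z\|_{\infty} \le c \|z\|$ for every $z \in X$.

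Next, since $A$ is bounded in $(X, \|\cdot\|)$, there exists $M > 0$ with $\|z\| \le M$ for all $z \in A$. Combining the two inequalities, for every $z = \sum_{i=1}^n \alpha_i x_i \in A$ and every $i$,
\[
|\alpha_i| \le \|z\|_{\infty} \le c\|z\| \le cM.
\]
Therefore $S \subseteq [0, cM]$, proving that $S$ is bounded.

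There is no real obstacle here: the only non-trivial ingredient is the equivalence of norms on a finite-dimensional space, which is classical. If one wished to avoid invoking norm equivalence as a black box, the same conclusion can be reached by noting that the coordinate functional $f_i : X \to \mathbb{R}$ defined by $f_i(z) = \alpha_i$ is linear on a finite-dimensional Banach space and hence bounded, so $|\alpha_i| = |f_i(z)| \le \|f_i\| \cdot \|z\| \le \|f_i\| \cdot M$ for every $z \in A$. Either route yields the proposition in a couple of lines.
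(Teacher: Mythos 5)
Your proof is correct: the paper states this proposition without proof (calling it easy), and your argument via equivalence of norms on a finite-dimensional space, or equivalently boundedness of the coordinate functionals, is exactly the standard justification it implicitly relies on.
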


Using the above proposition, we obtain a necessary condition for a linear operator on a polygonal Banach space to be an extreme contraction, in terms of the norm attainment set. 

\begin{theorem}\label{theorem:dimension-n}
Let $ X $ be an $ n- $dimensional polygonal Banach space and let $ Y $ be any normed linear space. Let $ T \in L(X,Y) $ be an extreme contraction. Then $ span(M_T \cap E_X) = X. $ Moreover, if $ M_T \cap E_X $ contains  exactly $2n $ elements then $ T( M_T \cap E_X ) \subset E_Y.$
\end{theorem}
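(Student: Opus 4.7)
The plan is to prove both parts by a perturbation argument: assuming the conclusion fails, I would construct a nonzero bounded linear operator $A \in L(X,Y)$ such that both $T+A$ and $T-A$ are still contractions, thereby contradicting the extremality of $T$. The crucial technical ingredient in each case will be Proposition~\ref{proposition:bounded}, which provides uniform control on the coordinates of extreme points with respect to a chosen basis.

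For the first assertion, suppose $\operatorname{span}(M_T \cap E_X)$ is a proper subspace $W$ of $X$. Since $X$ is polygonal, $E_X$ is a finite (centrally symmetric) set, so there exists $\delta > 0$ with $\|Te\| \leq 1 - \delta$ for every $e \in E_X \setminus M_T$. Pick any nonzero linear functional $f \in X^*$ that vanishes on $W$, fix any unit vector $y_0 \in Y$, and set $A(x) = \varepsilon f(x) y_0$. On $M_T \cap E_X$ the perturbation $A$ vanishes, so $\|(T \pm A)e\| = 1$. On the finite set $E_X \setminus M_T$ the quantity $|f(e)|$ is bounded, so for $\varepsilon > 0$ small enough $\|(T \pm A)e\| \leq (1-\delta) + \varepsilon |f(e)| \leq 1$. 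Since $B_X = \operatorname{conv}(E_X)$, we conclude $\|T \pm A\| \leq 1$, and the decomposition $T = \tfrac{1}{2}((T+A) + (T-A))$ contradicts extremality.

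For the second assertion, by the first part and central symmetry of $E_X$, we can write $M_T \cap E_X = \{\pm x_1, \ldots, \pm x_n\}$ where $\{x_1, \ldots, x_n\}$ is a basis of $X$. Suppose some $Tx_i$ is not an extreme point of $B_Y$; then $Tx_i = \tfrac{1}{2}(u+v)$ with $u, v \in S_Y$, $u \neq v$. Set $w = u - Tx_i \neq 0$, so that $Tx_i + w = u$ and $Tx_i - w = v$ both lie in $S_Y$, and for any $\varepsilon \in (0,1]$ the points $Tx_i \pm \varepsilon w = (1-\varepsilon)Tx_i + \varepsilon u$ (respectively $+\varepsilon v$) lie in $B_Y$ by convexity. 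Define $A \in L(X,Y)$ by $A(x_i) = \varepsilon w$ and $A(x_j) = 0$ for $j \neq i$. Then $\|(T \pm A)(\pm x_j)\| \leq 1$ for every $j$ by construction.

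The remaining step is to verify $\|(T \pm A)e\| \leq 1$ for $e \in E_X \setminus M_T$, and this is where Proposition~\ref{proposition:bounded} enters. Writing $e = \sum_{j=1}^{n}\alpha_j(e) x_j$, the proposition provides a uniform bound $|\alpha_j(e)| \leq M$ over all $e \in E_X$, so $\|Ae\| = |\alpha_i(e)|\, \varepsilon \|w\| \leq M\varepsilon \|w\|$. Choosing $\varepsilon$ small enough that $M\varepsilon\|w\| \leq \delta$, where again $\delta > 0$ satisfies $\|Te\| \leq 1-\delta$ on the finite set $E_X \setminus M_T$, we get $\|(T\pm A)e\| \leq 1$ on all of $E_X$, hence on $B_X$. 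As $A \neq 0$, this contradicts extremality of $T$ and forces $Tx_i \in E_Y$. The main subtlety is the last verification: without Proposition~\ref{proposition:bounded} there is no a priori reason the coefficients in the expansion of an extreme point in a basis of norm-attaining extreme points should be controlled, and this is precisely what allows the perturbation argument to succeed.
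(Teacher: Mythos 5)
Your proof is correct, and for the second assertion it is essentially the paper's argument in different clothing: the paper perturbs $T$ at the single basis vector $x_k$ toward two nearby points $y,z$ of the segment witnessing non-extremality of $Tx_k$ (chosen in $S_Y\cap B[Tx_k,\tfrac{\epsilon}{2r}]$), and uses Proposition~\ref{proposition:bounded} plus the strict gap $\|Tx_j\|<1-\epsilon$ at the remaining extreme points exactly as you do with your operator $A$; your symmetric decomposition $T=\tfrac12((T+A)+(T-A))$ versus the paper's $T=(1-t)T_1+tT_2$ is an immaterial difference. For the first assertion, however, your route is genuinely different and simpler. The paper argues by contradiction with a sequence of perturbed operators $T_j$, assumes $\|T_j\|>1$ along a subsequence, shows $T_j\to T$, and uses compactness together with the finiteness of $E_X$ to produce a single extreme point $y_0\in M_T$ at which all $T_j$ attain norm, which forces $\|T_j\|=\|Ty_0\|=1$, a contradiction. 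You instead take a functional $f$ annihilating the proper subspace $\operatorname{span}(M_T\cap E_X)$, perturb by $A=\varepsilon f(\cdot)y_0$, and check $\|T\pm A\|\le 1$ directly on the finitely many extreme points: $A$ vanishes on $M_T\cap E_X$, and the uniform gap $\delta$ handles $E_X\setminus M_T$. This one-shot argument avoids the subsequence and limiting analysis entirely and makes transparent that only two facts are used, namely $B_X=\operatorname{conv}(E_X)$ and the finiteness of $E_X$; the paper's version, on the other hand, is closer in spirit to arguments that survive when one only knows $M_T$ is closed rather than that $E_X$ is finite. One cosmetic remark: your preamble claims Proposition~\ref{proposition:bounded} is crucial in \emph{both} cases, but your first argument never needs it (finiteness of $E_X$ already bounds $|f(e)|$); it is only the second part that genuinely requires the uniform coefficient bound.
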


\begin{proof}
First, we prove that  $ span(M_T \cap E_X) = X. $ Since $ X $ is an $ n- $dimensional Banach space, an easy application of Krein-Milman theorem ensures that $ B_{X} $ has at least $ 2n $ extreme points out of which $ n $ are linearly independent. If $ M_T $ contains $ n $ linearly independent extreme points then we are done. Suppose that $ M_T $ does not contain $ n $ linearly independent extreme points. Let $ M_T $ contains at most $ k $ linearly independent extreme points, where $ k < n. $ Let $ \{x_1, x_2, \ldots, x_k\} $ be one such linearly independent subset of $ M_T, $ consisting of extreme points of $ B_X. $ We extend it to a basis  $ \{ x_1, x_2, \ldots,x_k,x_{k+1}, \ldots , x_n\} $ of $ X $ such that each $ x_i, ~~ i=1, 2, \ldots, n $ is an extreme point of $ B_X. $ 
Now, we can choose $ w(\neq 0) \in B_Y $ such that $ T{x_n} \pm \frac{w}{j} \neq 0 $ for every $ j \in \mathbb{N}. $ 
For each $ j \in \mathbb{N}, $ define linear operators $ T_j, ~~ S_j : X \rightarrow Y $ as follows:

\begin{equation*}
\begin{aligned}[c]
T_j(x_i) &=T{x_i} ~ (i =1,2,\ldots, n-1)\\
T_j(x_n) &=Tx_n + \frac{w}{j}
\end{aligned}
\qquad \qquad
\begin{aligned}[c]
S_j(x_i) & =  T{x_i} ~(i = 1,2,\ldots, n-1)\\
S_j(x_n) & =  Tx_n - \frac{w}{j}.
\end{aligned}
\end{equation*}
Then $ T \neq T_j $ and $ T \neq S_j $ for all $ j \in \mathbb{N}. $ Also, we have $ T= \frac{1}{2}(T_j+S_j) $ for all
$ j \in \mathbb{N}. $\\
Since $ T $ is an extreme contraction, there exists a subsequence $\{j_k\}$ such that for all $ j_k, $ either $ \|T_{j_k}\| > 1 $ or $ \|S_{j_k}\| > 1. $ Without loss of generality, we may and do assume that $ \|T_j\| > 1 $ for all $ j \in \mathbb{N}.$\\
Since $ X $ is finite-dimensional, each $ T_j $ attains norm at an extreme point of $ B_X. $ Suppose $ \|T_j{y_j}\|= \|T_j\| > 1 $ where each $ y_j $ is an extreme point of $ B_X. $\\
Let $ z = a_1x_1+\ldots+a_{n-1}x_{n-1}+a_nx_n \in S_X $ be arbitrary, then by Proposition \ref{proposition:bounded}, there exists $ r>0 $ such that $ |a_n| \leq r, $ for each $ z \in S_X.$\\
Now, $ \| (T_j-T)z\|=\|(T_j-T)(a_1x_1+\ldots+a_nx_n)\|=\|\frac{a_nw}{j}\| \leq \frac{r}{j} \rightarrow 0 $ as $ j \rightarrow \infty.$\\
So, $ T_j \rightarrow T $ as $ j \rightarrow \infty. $ As each $ y_j \in S_X $ and $ S_X $ is compact, we may assume without loss of generality that 
$ y_j \rightarrow y_0 $ as $ j \rightarrow \infty, $ where $ y_0 \in S_X.$\\
Noting that there are only finitely many extreme points and each  $ y_j $ is an extreme point, we can further assume that $ y_j=y_0 $ for each $ j. $ So, $ T_j{y_j}=T_j{y_0}\rightarrow Ty_0 $ as $ j \rightarrow \infty. $\\
Therefore, $ \|Ty_0\| \geq 1. $ As $ \|T\|=1, $ $ \|Ty_0\|=1. $ So, $ y_0 \in M_T .$ Since $ y_0 $ is an extreme point and $ M_T $ has at  most $ k $ linearly independent extreme points, the set $ \{ y_0, x_1, x_2, \ldots, x_k \} $ is linearly dependent. Let $ y_0=a_1x_1+a_2x_2+\ldots + a_k x_k. $ Then, 
$ T_j{y_j}=T_j{y_0}=T_j(a_1x_1+\ldots+a_kx_k)=a_1Tx_1+\ldots+a_kTx_k=Ty_0. $
Therefore, $ \|T_j{y_j}\|=\|Ty_0\|=1, $ which is contradiction to the fact that $\|T_j\| > 1 $ for all $j \in \mathbb{N}. $ Thus, if $ T $ is an extreme contraction, then $ span(M_T \cap E_X) = X. $\\
Now, we prove the next part of the theorem. As $ X $ is a finite-dimensional polygonal Banach space, $ E_X $ is a finite set. Let $ \{ \pm{x_1}, \pm{x_2}, \ldots, \pm{x_m} : ~~ m\geq n \} =E_X $ and $ (M_T \cap E_X) = \{ \pm{x_i} : ~~i=1,2,\ldots,n \}. $\\
As $ T $ attains norm only at $ \{ \pm{x_i} : ~~i=1,2,\ldots,n \}, $ $ \|T\|=\|Tx_i\|=1 $ for all $ i=1,2,\ldots,n $ and $ \|Tx_j\| < \|T\|=1 $ for all $ j=n+1,n+2,\ldots,m. $ Therefore, there exists $ \epsilon > 0 $ such that $ \|Tx_j\| < 1-\epsilon $ for all $ j=n+1, n+2, \ldots ,m. $\\
Clearly, $ \{x_1, x_2, \ldots,x_n\} $ is a basis of $ X. $ Therefore, there exist scalars $ a_{1j}, a_{2j}, \ldots , a_{nj} $ such that
\[ x_j=a_{1j}x_1+a_{2j}x_2+\ldots+a_{nj}x_n, ~\forall ~ j=n+1,n+2,\ldots,m. \]
Now, using Proposition \ref{proposition:bounded}, there exists $ r>0 $ such that $ |a_{ij}| \leq r $ for all $ i=1,2,\ldots,n $ and $ j=n+1,n+2,\ldots,m. $\\
Suppose $ Tx_k $ is not an extreme point of $ B_Y $ for some $ k \in \{ 1,2,\ldots,n\}. $ Then there exist $ y,z \in S_Y \cap B[Tx_k, \frac{\epsilon}{2r}]$ such that $ Tx_k= (1-t)y+tz $ for some $ t \in (0,1). $\\
Let us define $ T_1:X \rightarrow Y $ by $ T_1{x_i}=Tx_i $ for all $ i=1,2, \ldots,n ~\mbox{and}~ i\neq k $ and $ T_1{x_k}=y, ~i=k. $\\
Similarly, we define $ T_2:X \rightarrow Y $ by $ T_2{x_i}=Tx_i $ for all $ i=1,2, \ldots,n ~\mbox{and}~ i\neq k $ and $ T_2{x_k}=z, ~i=k. $\\
Clearly, $ T_1 \neq T $ and $ T_2 \neq T. $ Also, we have $ T=(1-t)T_1+tT_2. $ We claim that $ \|T_1\|=\|T_2\|=1. $\\
Clearly, $ \|T_1{x_i}\|=\|Tx_i\|=1 $ for all $ i=1,2,\ldots,n ~\mbox{and}~ i\neq k. $ Also, $ \|T_1{x_k}\|=\|y\|=1 $ as $ y \in S_Y \cap B[Tx_k, \frac{\epsilon}{2r}].$\\
Furthermore, for each $ j=n+1,\ldots,m, $ we have,
\begin{eqnarray*}
        \|T_1{x_j}\| & = & \|a_{1j}T_1{x_1}+\ldots+a_{kj}T_1{x_{k}}+\ldots +a_{nj}T_1{x_n}\| \\
				             & = & \|a_{1j}T{x_1}+\ldots+a_{kj}y+\ldots +a_{nj}T{x_n}\| \\
							       & = & \|a_{1j}T{x_1}+\ldots +a_{nj}T{x_n}+a_{kj}(y-T{x_k})\| \\
									& \leq & \|T{x_j}\| + |a_{kj}|\|y-T{x_k}\| \\
									& \leq & 1- \epsilon + r\frac{\epsilon}{2r} \\
									   & = & 1-\frac{\epsilon}{2} < 1.
\end{eqnarray*}
Thus, for any extreme point $ x_i $  of $ B_X, $ $ \|T_1{x_i}\| \leq 1 $ and so $ \|T_1\|=1. $
Similarly, $ \|T_2\|=1.$\\
Therefore, $ T=(1-t)T_1+tT_2 $ and $\|T_1\|=\|T_2\|=1. $ This contradicts the fact that $ T $ is an extreme contraction. Thus, if $ M_T \cap E_X $ contains  exactly $2n $ elements then $ T( M_T \cap E_X ) \subset E_Y.$ This completes the proof of the theorem. 
\end{proof}

As an immediate application of Theorem \ref{theorem:dimension-n}, it is possible to characterize extreme contractions from a finite-dimensional polygonal Banach space to a strictly convex normed linear space. We present the characterization in the form of the next theorem.   

\begin{theorem}\label{theorem:poly}
Let $ X $ be a finite-dimensional polygonal Banach space and let $ Y $ be a strictly convex normed linear space. Then $ T \in L(X,Y), $ with $ \|T\|=1, $ is an extreme contraction if and only if $ span (M_T \cap E_X) = X.$
\end{theorem}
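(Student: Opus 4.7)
The forward implication is already handed to us: if $T$ is an extreme contraction, then Theorem \ref{theorem:dimension-n} gives $\mathrm{span}(M_T \cap E_X) = X$ directly, so this direction requires no additional work.

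For the reverse implication, the plan is to use strict convexity of $Y$ to force any nontrivial midpoint decomposition of $T$ to coincide with $T$ on the spanning set $M_T \cap E_X$, and hence everywhere. Concretely, suppose $\|T\| = 1$ and $\mathrm{span}(M_T \cap E_X) = X$, and assume for contradiction that $T = \tfrac{1}{2}(T_1 + T_2)$ with $\|T_1\| = \|T_2\| = 1$ and $T_1 \neq T_2$. The first step is to fix an arbitrary $x \in M_T \cap E_X$ and observe that
\[
1 = \|Tx\| = \tfrac{1}{2}\|T_1 x + T_2 x\| \leq \tfrac{1}{2}(\|T_1 x\| + \|T_2 x\|) \leq 1,
\]
which forces $\|T_1 x\| = \|T_2 x\| = 1$ and $\|T_1 x + T_2 x\| = 2$. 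Strict convexity of $Y$ then yields $T_1 x = T_2 x$, and since both equal $Tx$, we conclude $T_1 x = T_2 x = Tx$ for every $x \in M_T \cap E_X$.

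The second step is purely linear-algebraic: since $M_T \cap E_X$ spans $X$, the equality $T_1 = T_2 = T$ extends by linearity to all of $X$, contradicting $T_1 \neq T_2$. Hence $T$ admits no nontrivial convex decomposition in the unit ball of $L(X,Y)$, i.e., $T$ is an extreme contraction.

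I do not anticipate a genuine obstacle here; the only delicate point is the standard strict-convexity argument in the first step, which requires verifying that both $\|T_i x\|$ attain the value $1$ before invoking the uniqueness-of-midpoint property of strictly convex spaces. The polygonal structure of $X$ plays no explicit role in the reverse direction beyond what is already encoded in the hypothesis on $M_T \cap E_X$; it is used implicitly through Theorem \ref{theorem:dimension-n} only in the forward direction.
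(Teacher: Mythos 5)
Your proposal is correct and follows essentially the same route as the paper: the forward direction is quoted from Theorem \ref{theorem:dimension-n}, and the reverse direction forces $T_1x = T_2x = Tx$ on a spanning subset of $M_T \cap E_X$ via strict convexity of $Y$ (the paper phrases this as $E_Y = S_Y$ and extremality of $Tx_i$ in $B_Y$, which is exactly the triangle-inequality equality case you spell out), then concludes $T_1 = T_2 = T$ by linearity. Your reduction to midpoint decompositions instead of general convex combinations is a standard, harmless simplification.
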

\begin{proof}
The proof of the necessary part of the theorem follows directly from Theorem  \ref{theorem:dimension-n}. Here, we prove the sufficient part of the theorem. Suppose that $ T $ is not an extreme contraction. Then there exist $ T_1, T_2 \in L(X,Y) $ such that $ T_1, T_2 \neq T, $ $ \|T_1\|=\|T_2\|=1 $ and $ T = (1-t)T_1+tT_2 $ for some $ t \in (0,1). $ Let $ \{ x_1, x_2, \ldots, x_n \} \subseteq M_T \cap E_X $ be a basis of $ X. $ Then $ Tx_i = (1-t)T_{1}x_{i}+ t T_{2}x_{i}, $ for each $ i \in \{1,2, \ldots, n \}. $ We also note that $ T_{1}x_{i}, T_{2}x_{i} \in B_Y, $ as $ \|T_1\|=\|T_2\|=1. $ As $ Y $ is strictly convex, we have $ E_Y=S_Y. $ Therefore, each $ Tx_{i} $ is an extreme point of $ B_{Y} $ and it follows from that $ Tx_{i}= T_{1}x_{i} = T_{2}x_{i}, $ for each $ i \in \{1,2, \ldots, n \}. $ However, this implies that $ T_1, T_2 $ agree with $ T $ on a basis of $ X $ and therefore, $ T_1 = T_2 = T.$ This contradicts our initial assumption that $ T_1, T_2 \neq T. $ Thus $ T $ is an extreme contraction and this completes the proof of the theorem.
\end{proof}

As another useful application of Theorem \ref{theorem:dimension-n}, it is possible to  characterize extreme contractions from $ l_1^n $ to $ Y, $ where $ Y $ is any normed linear space. It is worth mentioning that the following corollary also follows from \cite[Lemma 2.8]{Sha}.
\begin{cor}\label{corollary:l_1}
Let $ X=l_{1}^{n} $ and let $ Y $ be any normed linear space. Let $ T \in L(X,Y) $ with $\|T\|=1.$ Then $ T $ is extreme contraction if and only if $ M_T=E_X $ and $ T(E_X) \subseteq E_Y. $  
\end{cor}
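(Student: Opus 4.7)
The plan is to read off the corollary from Theorem \ref{theorem:dimension-n} by exploiting the rigid shape of $E_X$ when $X = l_{1}^{n}$: indeed $E_X = \{\pm e_1, \ldots, \pm e_n\}$ consists of exactly $2n$ points, and any $n$ linearly independent members of $E_X$ must pick one signed basis vector from each of the $n$ coordinate directions.

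For the necessity, suppose $T$ is an extreme contraction. Theorem \ref{theorem:dimension-n} furnishes $n$ linearly independent extreme points $\epsilon_1 e_{i_1}, \ldots, \epsilon_n e_{i_n} \in M_T \cap E_X$, where $\epsilon_j \in \{\pm 1\}$ and $\{i_1, \ldots, i_n\} = \{1, \ldots, n\}$. Because $\|T(-x)\| = \|Tx\|$, the set $M_T$ is closed under negation, so in fact $\pm e_i \in M_T$ for every $i$, giving $E_X \subseteq M_T$ and $|M_T \cap E_X| = 2n$. The second assertion of Theorem \ref{theorem:dimension-n} then yields $T(E_X) \subseteq E_Y$. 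For the reverse inclusion $M_T \subseteq E_X$, I would take $x = \sum_i x_i e_i \in M_T \cap S_X$, write $x = \sum_i |x_i| (\mathrm{sgn}(x_i) e_i)$ as a convex combination of elements of $E_X$, and observe that $Tx$ is thereby expressed as a convex combination of the unit vectors $T(\mathrm{sgn}(x_i) e_i) \in E_Y$; the equality $\|Tx\| = 1$ together with extremality of $T$ should then force all but one of the $x_i$ to vanish, so $x \in E_X$.

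For the sufficiency, assume the two conditions hold and suppose $T = \tfrac{1}{2}(T_1 + T_2)$ with $\|T_1\|, \|T_2\| \leq 1$. For every $i$, the vector $Te_i \in E_Y$ is the midpoint of $T_1 e_i, T_2 e_i \in B_Y$, and extremality of $Te_i$ in $B_Y$ forces $T_1 e_i = T_2 e_i = Te_i$. Since $\{e_1, \ldots, e_n\}$ is a basis of $X$, the operators $T_1$ and $T_2$ agree with $T$ everywhere, whence $T_1 = T_2 = T$ and $T$ is extreme.

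The main difficulty I anticipate is establishing the inclusion $M_T \subseteq E_X$ in the necessity part. The other inclusion $E_X \subseteq M_T$ falls straight out of Theorem \ref{theorem:dimension-n} together with the symmetry of $M_T$, and the sufficiency direction is a routine extremal argument, but ruling out norm attainment at non-extreme unit vectors of $l_{1}^{n}$ requires a delicate interaction between the extremality of $T$ and the positions of the images $Te_1, \ldots, Te_n$ inside $E_Y$, since two distinct images can coincide and thereby cause a proper face of $B_X$ to be mapped into $S_Y$.
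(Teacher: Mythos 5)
Your sufficiency argument and the first half of your necessity argument are exactly what the paper does: sufficiency is proved by the same convexity argument (each $Te_i$ is extreme in $B_Y$, so any decomposition $T=(1-t)T_1+tT_2$ with $\|T_1\|=\|T_2\|=1$ forces $T_1e_i=T_2e_i=Te_i$, and $\{e_1,\ldots,e_n\}$ is a basis), while the necessity is disposed of by invoking Theorem \ref{theorem:dimension-n}; your spelling out of that step --- symmetry of $M_T$ plus the rigid structure of $E_{l_1^n}$ gives $E_X\subseteq M_T$, hence $|M_T\cap E_X|=2n$ automatically, hence $T(E_X)\subseteq E_Y$ by the second assertion of the theorem --- is precisely the intended reading of the paper's one-line ``follows directly from Theorem \ref{theorem:dimension-n}.''

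The step you flag as the main difficulty, the inclusion $M_T\subseteq E_X$, is a genuine gap in your write-up, and it cannot be closed: the inclusion is false as literally stated. Your sketch --- writing $x\in M_T$ as the convex combination $\sum_i|x_i|(\operatorname{sgn}(x_i)e_i)$ and hoping that $\|Tx\|=1$ together with extremality of $T$ forces all but one coefficient to vanish --- fails for exactly the reason you name at the end: a convex combination of extreme points of $B_Y$ can have norm one without collapsing to a single term, namely when the images $T(\operatorname{sgn}(x_i)e_i)$ lie on a common face of $B_Y$ or simply coincide, and the extremality of $T$ gives no further leverage there. Concretely, take $T\in L(l_1^2,l_1^2)$ with $Te_1=Te_2=e_1$: by the very sufficiency argument above (and consistently with the count $(2p)^n$ in the remark following the corollary, which includes such non-injective assignments) $T$ is an extreme contraction, yet $T$ attains its norm on the entire segment joining $e_1$ and $e_2$, so $M_T\supsetneq E_X$; a unit-vector-preserving example is $Te_1=(1,1)$, $Te_2=(1,-1)$ in $L(l_1^2,l_\infty^2)$. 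Note that the paper's proof never establishes $M_T\subseteq E_X$ either: Theorem \ref{theorem:dimension-n} yields only $M_T\cap E_X=E_X$ together with $T(E_X)\subseteq E_Y$, and the condition ``$M_T=E_X$'' plays no role in the sufficiency proof. So the condition must be read in the weaker sense $E_X\subseteq M_T$ (equivalently $M_T\cap E_X=E_X$); rather than trying to repair your argument for the reverse inclusion, you should record that it is neither provable nor needed.
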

\begin{proof}
Firstly, we know that $ \{ \pm e_1, \pm e_2, \ldots, \pm e_n \} $ is the set of all extreme points of the unit ball of $ l_{1}^{n},$ where $e_i=(\underbrace{0,0,\ldots,0,1,}_{i}0,\ldots,0)$ for each $ i \in \{1,2, \ldots, n\}. $ Now, the proof of the necessary part of the theorem follows directly from Theorem  \ref{theorem:dimension-n}. Here, we prove the sufficient part of the theorem. Suppose that $ T $ is not an extreme contraction. Then there exist $ T_1, T_2 \in L(X,Y) $ such that $ T_1, T_2 \neq T, $ $ \|T_1\|=\|T_2\|=1 $ and $ T = (1-t)T_1+tT_2 $ for some $ t \in (0,1). $ Then $ Te_i = (1-t)T_{1}e_{i}+ t T_{2}e_{i}, $ for each $ i \in \{1,2, \ldots, n \}. $  We also note that for each $ i \in \{ 1,2, \ldots, n \}, $ $ T_{1}e_{i}, T_{2}e_{i} \in B_Y, $ as $ \|T_1\|=\|T_2\|=1. $ Since $ Te_{i} \in E_Y, $ it follows that $ Te_{i}= T_{1}e_{i} = T_{2}e_{i}, $ for each $ i \in \{1,2, \ldots, n \}. $ However, this implies that $ T_1, T_2 $ agree with $ T $ on a basis of $ X $ and therefore, $ T_1 = T_2 = T.$ This contradicts our initial assumption that $ T_1, T_2 \neq T. $ Thus $ T $ is an extreme contraction and this completes the proof of the corollary. 
\end{proof}
 
\begin{remark}
If $ Y $ is a polygonal Banach space with $ p $ pair of extreme points then the number of extreme contractions in $ L(l_{1}^{n},Y) $ is $ (2p)^n. $ Moreover, since the number of extreme contractions in $ L(l_{1}^{n},Y) $ is finite, each extreme point of the unit ball of $ L(l_{1}^{n},Y) $ is also an exposed point of the unit ball of $ L(l_{1}^{n},Y). $ Therefore, the number of exposed points of the unit ball of $ L(l_{1}^{n},Y) $ is also $ (2p)^n. $ In particular,\\
\noindent (i) the number of extreme contractions in $ L(l_{1}^{n},l_{1}^{n}) $ is $ {2^n}{n^n}. $\\
\noindent (ii) the number of extreme contractions in $ L(l_{1}^{n}, l_{\infty}^{n}) $ is $ 2^{n^2}. $\\
\end{remark}

As illustrated in \cite{LP}, one of the most intriguing aspects of the study of extreme contractions between Banach spaces is to explore the extremity of the images of the extreme points of the unit ball of the domain space under extreme contractions. Till now, we have considered only finite-dimensional polygonal Banach spaces as the domain space. However, if we choose the domain space to be something other than the polygonal Banach space then the scenario changes drastically. In fact, choosing the Euclidean space $ l_2^n $ as the domain space and $ l_{\infty}^n $ as the co-domain space, we have the following proposition:

\begin{prop}\label{proposition:l_2}
Let $ T \in L(X,Y) $ with $ \|T\|=1,$ where $ X=l_{2}^n $ and $ Y=l_{\infty}^n. $ Then $ T $ is an extreme contraction if and only if corresponding matrix representation of $ T $ with respect to standard ordered basis is of the form
\[ 
\begin{bmatrix}
a_{11}&a_{12}&\cdots &a_{1n} \\
a_{21}&a_{22}&\cdots &a_{2n} \\
\vdots & \vdots & \ddots & \vdots\\
a_{n1}&a_{n2}&\cdots &a_{nn}
\end{bmatrix}\]
where $ \sum \limits_{j=1}^{n} {a_{ij}}^2=1 $ for all $ i=1,2,\ldots,n.$
\end{prop}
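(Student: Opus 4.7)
The plan is to identify the operator $T$ with its matrix $A = [a_{ij}]$ with respect to the standard bases, and then reduce the problem to a row-by-row analysis. Writing $a_i = (a_{i1}, \ldots, a_{in})$ for the $i$-th row of $A$, for any $x \in l_2^n$ the $i$-th coordinate of $Tx$ in $l_{\infty}^n$ is the inner product $\langle a_i, x\rangle$, so $\|Tx\|_{\infty} = \max_i |\langle a_i, x\rangle|$. A direct application of Cauchy--Schwarz (saturated at $x = a_i/\|a_i\|_2$) then yields the key identity
\[
\|T\| \;=\; \max_{1 \leq i \leq n} \|a_i\|_2 .
\]
Since $\|T\| = 1$, the hypothesis of the proposition is precisely that every row of $A$ saturates the maximum, i.e.\ that each row is a unit vector in $l_2^n$.

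For the necessary direction, I would argue by contrapositive: assume that some row $a_{i_0}$ satisfies $\|a_{i_0}\|_2 < 1$ and set $\delta = 1 - \|a_{i_0}\|_2 > 0$. Pick any $v \in \mathbb{R}^n$ with $\|v\|_2 = \delta$ and $v \neq 0$, and define operators $T_{\pm}$ whose matrix agrees with $A$ in every row except the $i_0$-th, where it is $a_{i_0} \pm v$. The triangle inequality gives $\|a_{i_0} \pm v\|_2 \leq 1$, while all other rows are unchanged and have norm at most $1$, so the row-norm formula above yields $\|T_{\pm}\| \leq 1$. Since $T = \tfrac{1}{2}(T_+ + T_-)$ and $T_{\pm} \neq T$, the operator $T$ is not an extreme contraction.

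For the sufficient direction, suppose every row satisfies $\|a_i\|_2 = 1$, and write $T = (1-t)T_1 + tT_2$ with $\|T_1\|, \|T_2\| \leq 1$ and $t \in (0,1)$. Letting $a_i^{(1)}, a_i^{(2)}$ denote the $i$-th rows of the matrix representations of $T_1, T_2$, the row-norm formula forces $\|a_i^{(k)}\|_2 \leq 1$ for $k=1,2$ and all $i$. On the other hand, $a_i = (1-t) a_i^{(1)} + t a_i^{(2)}$ with $\|a_i\|_2 = 1$. Strict convexity of the Euclidean unit ball in $\mathbb{R}^n$ (viewed as $l_2^n$) then compels $a_i^{(1)} = a_i^{(2)} = a_i$ for every $i$, whence $T_1 = T_2 = T$. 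Thus $T$ is an extreme contraction.

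There is no single hard step here: the proof essentially turns on the row-by-row formula $\|T\| = \max_i \|a_i\|_2$, which effectively decouples the problem into $n$ independent copies of the (trivial) fact that the extreme points of $B_{l_2^n}$ are exactly the unit sphere. The only thing that deserves a moment of care is making sure, in the forward direction, that the perturbed operators $T_{\pm}$ really remain in $B_{L(X,Y)}$; this is immediate from the decoupled norm formula, but it is the reason the argument would fail if we replaced $l_{\infty}^n$ by a codomain whose norm did not split as a maximum over coordinates.
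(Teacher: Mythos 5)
Your proof is correct, but it takes a different route from the paper. The paper's argument is a two-line reduction: it first observes that for reflexive spaces $ T $ is an extreme contraction if and only if the adjoint $ T^* $ is, and then notes that $ T^* \in L(l_1^n, l_2^n) $ falls under Corollary \ref{corollary:l_1}, which forces each $ T^*e_i $ (i.e., each row of the matrix of $ T $) to be an extreme point of $ B_{l_2^n} $, hence a Euclidean unit vector. You instead work directly in $ L(l_2^n, l_\infty^n) $: you derive the decoupled norm formula $ \|T\| = \max_i \|a_i\|_2 $, perturb a deficient row to kill extremality in the necessity direction, and use strict convexity of the Euclidean ball row by row for sufficiency. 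In effect your argument unfolds the computation that the paper's duality step encapsulates -- the rows $ a_i $ are exactly the vectors $ T^*e_i $ -- so the mathematical content is closely related, but yours is self-contained (it uses neither the adjoint trick nor Corollary \ref{corollary:l_1} nor, indirectly, Theorem \ref{theorem:dimension-n}), while the paper's version is shorter given the machinery already developed and transfers immediately to other situations where extreme contractions out of the predual are understood. One small point worth making explicit in your necessity step: since $ \|T\| = 1 $, every row already satisfies $ \|a_i\|_2 \le 1 $, which is what guarantees $ \|T_\pm\| \le 1 $ after you modify only the $ i_0 $-th row; you use this implicitly and it is fine.
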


\begin{proof}
We first note that a bounded linear operator $ T $ between reflexive Banach spaces is an extreme contraction if and only if $ T^* $ is an extreme contraction, which follows from the facts that $ T = (1-t)T_1+ t T_2 \Leftrightarrow T^*=(1-t)T_{1}^{*} + t T_{2}^{*}, $ $ \|T\|=\|T^*\| $ and $ (T^*)^*=T $ by reflexivity of domain and co-domain spaces. Now, the proof of the proposition directly follows from Corollary \ref{corollary:l_1}. 
\end{proof}

In the following two examples, we further illustrate the variation in the norm attainment set of extreme contractions, depending on the domain and the range space.

\begin{example}
In $ L(l_{2}^2, l_{\infty}^2), $ \[ T=
\begin{bmatrix}
1&0 \\
1&0
\end{bmatrix}\] is an extreme contraction though $ T $ attains norm only at $ \pm(1,0).$ 
\end{example}
\begin{example}
In $ L(l_{2}^2, l_{\infty}^2), $ \[ T=
\begin{bmatrix}
1&0 \\
0&1
\end{bmatrix}\] is an extreme contraction. Here, $ T $ attains norm at $ \pm(1,0)$ and $ \pm(0,1) $ but $T(1,0)=(1,0)$ and $T(0,1)=(0,1) $ are not extreme points of unit ball of $l_{\infty}^2.$   
\end{example}

Let us now focus on the connections between the study carried out in \cite{LP} and some of the results obtained by us in the present paper, in light of the newly introduced notion of L-P property for a pair of Banach spaces $ (X,Y). $ It is easy to observe that $ l_{1}^{n} $ is a universal L-P space, in the sense that the pair $ (l_{1}^{n},Y) $ has L-P property for every Banach space $ Y. $ We would also like to note that it follows from the works \cite{Li, L} of Lima that there exist finite-dimensional polygonal Banach spaces $ X,~Y, $ with $ X \neq Y, $ such that the pair $ (X,Y) $ has L-P property. Also, there are finite-dimensional polygonal Banach spaces $ X, Y $ with $ X \neq Y, $ such that the pair $ (X,Y) $ does not satisfy L-P property. As for example, the pair  $ (l_{\infty}^{3},l_{1}^{3}) $ has L-P property, which follows from  \cite[Theorem 2.1]{L}, whereas the pair $ (l_{\infty}^{4},l_{1}^{4}) $ does not satisfy L-P property, which follows from Lemma $ 3.2 $ of \cite{L}. \smallskip 

Let us observe that Proposition \ref{proposition:l_2} implies, in particular, that the pair $ (l_{2}^{n}, l_{\infty}^{n}) $ does not have L-P property. Now, we prove that this observation is actually a consequence of the following general result.

\begin{theorem}\label{theorem:L-Pstrict}
Let $ X $ be any strictly convex Banach space and $ Y $ be a finite-dimensional polygonal Banach space. Then the pair $ (X,Y) $ does not have L-P property.
\end{theorem}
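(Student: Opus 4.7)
My strategy is to exhibit a norm-one extreme contraction $T\in L(X,Y)$ that fails $T(E_X)\subseteq E_Y$; this immediately refutes the forward direction of the L-P biconditional. Two structural facts frame the argument: strict convexity of $X$ gives $E_X=S_X$, and polygonality of $Y$ makes $E_Y$ a finite, hence topologically discrete, set.

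The first ingredient is a purely topological obstruction that kills the implication $T(E_X)\subseteq E_Y$ for \emph{every} non-zero $T$. Assuming $\dim X\geq 2$ (the case $\dim X=1$ is degenerate and would be excluded by the convention that strict convexity is non-trivial), the sphere $S_X$ is connected, being the continuous image of $X\setminus\{0\}$ under radial projection. If a non-zero linear $T$ sent $S_X$ into the discrete set $E_Y$, then $T(S_X)$ would be a connected subset of $E_Y$ and therefore a single point; the symmetry $T(-x)=-T(x)$ would then force $T\equiv 0$, contradicting $\|T\|=1$. Hence every norm-one $T\in L(X,Y)$ violates $T(E_X)\subseteq E_Y$, and the proof reduces to producing a single extreme contraction.

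For existence I would use the rank-one operator $T(x):=f(x)\,y_0$, where $y_0\in E_Y$ and $f$ is any extreme point of $B_{X^*}$ (such an $f$ is provided by Banach--Alaoglu and Krein--Milman, and is automatically norm-one). Clearly $\|T\|=1$. Suppose $T=\tfrac{1}{2}(T_1+T_2)$ with $\|T_i\|\leq 1$. For each $y^*\in B_{Y^*}$ satisfying $y^*(y_0)=1$ (furnished by Hahn--Banach), the compositions $y^*\circ T_i$ lie in $B_{X^*}$ and average to $y^*\circ T=f$; extremality of $f$ then forces $y^*\circ T_i=f$, equivalently $y^*\bigl(T_ix-f(x)\,y_0\bigr)=0$ for all $x\in X$. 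Since $y_0$ is a vertex of the polytope $B_Y$, the set $F(y_0):=\{y^*\in B_{Y^*}:y^*(y_0)=1\}$ is the dual facet of $B_{Y^*}$, and because it lies in the affine hyperplane $y^*(y_0)=1$ missing the origin, its linear span is all of $Y^*$. Hence $\bigcap_{y^*\in F(y_0)}\ker y^*=\{0\}$, forcing $T_ix=f(x)y_0=T(x)$ for every $x$, so $T_1=T_2=T$ and $T$ is extreme.

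The main technical step is the extremality of the rank-one operator $f\otimes y_0$, which hinges on the polytope duality identifying vertices of $B_Y$ with facets of $B_{Y^*}$ and the resulting identity $\lin F(y_0)=Y^*$. Once this is in hand, combining with the connectedness obstruction yields the failure of L-P property for $(X,Y)$.
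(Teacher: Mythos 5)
Your proof is correct, but it takes a genuinely different route from the paper's. The paper argues by contradiction: assuming the L-P property, it takes an arbitrary extreme contraction $T$, notes that $E_X=S_X$ is infinite (strict convexity, with $\dim X\geq 2$ implicit) while $E_Y$ is finite, concludes that $T$ cannot be injective, and hence some unit vector is mapped to $0\notin E_Y$, contradicting $T(E_X)\subseteq E_Y$. Your connectedness argument plays exactly the role of this cardinality/injectivity step and is equally elementary; but the second half of your proof has no counterpart in the paper: you exhibit a concrete extreme contraction, namely $f\otimes y_0$ with $f$ an extreme point of $B_{X^*}$ and $y_0\in E_Y$, and prove its extremality from the extremality of $f$ together with the fact that the dual face $F(y_0)$ of a vertex of the polytope $B_Y$ is a facet of $B_{Y^*}$, hence spans $Y^*$. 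This buys you something real: the paper's contradiction only has force if $L(X,Y)$ contains at least one extreme contraction --- otherwise both sides of the L-P biconditional are vacuously false for every norm-one operator and the property would hold trivially --- and the paper never verifies this existence. Your witness $f\otimes y_0$ settles it, at the modest cost of invoking Krein--Milman in $B_{X^*}$ and standard polytope duality in $Y$; so your argument is constructive and, strictly speaking, more complete. Note also that both arguments (and the theorem itself) require $\dim X\geq 2$: for $X=\mathbb{R}$ the pair $(X,Y)$ does have the L-P property, since $L(\mathbb{R},Y)\cong Y$ isometrically. You flag this restriction explicitly, whereas the paper leaves it implicit in the assertion that $E_X$ is infinite.
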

\begin{proof}
Suppose that the pair $ (X,Y) $ satisfies L-P property. Then for any extreme contraction $ T \in L(X,Y), $ we have $ T(E_X) \subseteq E_Y. $ As $ X $ is strictly convex, $ B_X $ has infinitely many extreme points, in fact, $ E_X=S_X. $ On the other hand, as $ Y $ is a finite-dimensional polygonal Banach space, $ B_Y $ has only finitely many extreme points. Thus, for any extreme contraction $ T \in L(X,Y), $ $ T $ cannot be an one-to-one operator. Therefore, there exists a nonzero $ x \in B_X $ such that $ T(x)=0. $ So, $ T(\frac{x}{\|x\|})=0, $ which contradicts the fact that $ T(E_X)= T(S_X)\subseteq E_Y. $ This completes the proof of the fact that the pair $ (X,Y) $ does not have L-P property.
\end{proof}

\begin{remark}
From Theorem \ref{theorem:L-Pstrict}, we can conclude that the pairs $ (l_{p}^{n},l_{1}^{n}) $ and $ (l_{p}^{n},l_{\infty}^{n}) $ do not have L-P property for any $ 1 < p < \infty. $ Furthermore, we already know that the pair $ (l_{1}^{n},Y) $ has L-P property for every Banach space $ Y. $ This illustrates that there exist Banach spaces $ X, ~~Y $ such that the pair $ (X,Y) $ has L-P property but the pair $ (Y,X) $ does not have L-P property. 
\end{remark}
\begin{remark}
 The above Theorem \ref{theorem:L-Pstrict} holds for any Banach space $Y$ with countably many extreme points.
\end{remark}

Now, we give a characterization of finite-dimensional Hilbert spaces among all finite-dimensional strictly convex Banach spaces in terms of L-P property.

\begin{theorem}\label{theorem:Hilbert-finite}
Let $ X $ be a finite-dimensional strictly convex Banach space. Then $ X $ is a Hilbert space if and only if the pair $ (X, X) $ has L-P property.
\end{theorem}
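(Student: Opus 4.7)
The plan is to translate L-P for $(X,X)$ into a statement about isometries and then treat the two directions separately, the easy one via the classical description of extreme contractions on a Hilbert space and the hard one via an isometry-group transitivity argument. Since $X$ is strictly convex we have $E_X = S_X$, so for a norm-one $T \in L(X,X)$ the inclusion $T(E_X) \subseteq E_X$ merely says $\|Tx\| = 1$ for every $x \in S_X$, that is, $T$ is a linear isometry of $X$. Conversely, in a strictly convex space every isometry is already an extreme contraction: if $U$ is an isometry and $U = \frac{1}{2}(T_1 + T_2)$ with $\|T_i\| \le 1$, then for each $x \in S_X$ the equality $Ux = \frac{1}{2}(T_1 x + T_2 x)$ together with $\|T_i x\| \le 1$ and strict convexity at $Ux \in S_X$ forces $T_1 x = T_2 x = Ux$. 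Thus L-P for $(X,X)$ is equivalent to the single statement: every extreme contraction in $L(X,X)$ is an isometry.

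With this reformulation, the forward direction is immediate: if $X$ is a finite-dimensional Hilbert space, it is classical that the extreme points of the unit ball of $L(X,X)$ are precisely the unitary operators, which in finite dimension are exactly the linear isometries of $X$, so L-P holds.

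For the reverse direction, assume L-P for $(X,X)$; I will show that the isometry group $G$ of $X$ acts transitively on $S_X$. Fix $x,y \in S_X$, use Hahn--Banach to pick $x^* \in S_{X^*}$ with $x^*(x) = 1$, and form the rank-one contraction $T \in L(X,X)$ defined by $T(z) = x^*(z)\, y$, so that $\|T\| = 1$ and $Tx = y$. Since $B_{L(X,X)}$ is a compact convex set in the finite-dimensional space $L(X,X)$, the Krein--Milman theorem writes $T$ as a finite convex combination $T = \sum_i t_i U_i$ of extreme contractions, each of which is an isometry by our assumption. Evaluating at $x$,
\[
y \;=\; Tx \;=\; \sum_i t_i\, U_i x,
\]
a convex combination of the unit vectors $U_i x \in S_X$ equal to the unit vector $y$. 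Strict convexity of $X$ rules out nontrivial convex representations of a unit vector by unit vectors, so $U_i x = y$ for every $i$; in particular some isometry sends $x$ to $y$, and since $x,y \in S_X$ were arbitrary, transitivity is proved.

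Finally, $G$ is a closed bounded subgroup of $L(X,X)$, hence compact in finite dimension. Averaging any auxiliary inner product on $X$ against the normalized Haar measure on $G$ produces a $G$-invariant inner product $\langle \cdot, \cdot \rangle$ on $X$; its associated norm is constant on the orbit $S_X = Gx$ and therefore proportional to $\|\cdot\|$ on all of $X$ by homogeneity, making $X$ a Hilbert space. The main obstacle is the reverse direction: identifying $T(z) = x^*(z) y$ as the right test operator so that the interaction of its Krein--Milman decomposition with strict convexity collapses to a single transitivity statement. The forward direction and the final transitive-isometries-to-inner-product step are either immediate or standard compact-group averaging.
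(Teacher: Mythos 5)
Your proof is correct, and the translation ``$(X,X)$ has L-P property $\iff$ every extreme contraction in $L(X,X)$ is an isometry'' (using $E_X=S_X$ and the fact that isometries into a strictly convex space are automatically extreme) is exactly the first step of the paper's proof as well. Where you diverge is the hard direction: the paper simply quotes Navarro's theorem that a finite-dimensional space in which every extreme contraction is an isometry must be Hilbert, whereas you reprove this (in the strictly convex case) from scratch: you feed the rank-one contraction $T(z)=x^*(z)y$ into the finite-dimensional Minkowski/Carath\'eodory decomposition of $B_{L(X,X)}$ into a finite convex combination of extreme contractions (this is the correct citation rather than Krein--Milman proper, though in finite dimension the fact you use is true), use strict convexity to collapse $y=\sum_i t_iU_ix$ to $U_ix=y$, conclude that the isometry group acts transitively on $S_X$, and then get an invariant inner product by Haar averaging over the compact isometry group. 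The paper's route is shorter and leans on the literature (\cite{N}); yours is self-contained, makes the role of strict convexity explicit in the collapse step, and in effect supplies a proof of the strictly convex instance of Navarro's result, at the cost of invoking the compact-group averaging machinery. Your forward direction, via the classical fact that extreme contractions of a finite-dimensional Hilbert space are the unitary (orthogonal) operators, matches what the paper obtains from the same source in one line. No gaps.
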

\begin{proof}
Firstly, we claim that for a strictly convex Banach space $ X, $ the pair $ (X, X) $ has L-P property if and only if  every extreme contraction in $ L(X,X) $ is an isometry. Clearly, if every extreme contraction in $ L(X,X) $ is an isometry then the pair $ (X,X) $ has L-P property, as for a strictly convex space $ X,$ $ S_X=E_X. $ Conversely, since $ (X,X) $ satisfies L-P property, $ T \in L(X,X) $ is an extreme contraction if and only if $ T(E_X) \subseteq E_X. $ As $ X $ is strictly convex, $ E_X = S_X. $  Thus, $ T \in L(X,X) $ is extreme contraction if and only if $ T(S_X) \subseteq S_X. $ In other words, we must have $ M_T=S_X $ and $ \|T\|=1, $ i.e., $ T \in L(X,X) $ is an isometry. This completes the proof of our claim. On the other hand, in \cite{N}, Navarro proved that a finite-dimensional Banach space $ X $ is a Hilbert space if and only if every extreme contraction in $ L(X,X) $ is an isometry. The proof of the theorem follows directly from a combination of these two facts. 
\end{proof}

As an application of Theorem \ref{theorem:Hilbert-finite}, we characterize  Hilbert spaces among all  strictly convex Banach spaces in terms of L-P property.

\begin{cor}
A strictly convex Banach space $ X $ is a Hilbert space if and only if for every two-dimensional subspace $ Y $ of $ X, $ the pair $ (Y, Y) $ has L-P property.
\end{cor}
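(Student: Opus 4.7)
The plan is to reduce both directions of the equivalence to Theorem \ref{theorem:Hilbert-finite}, applied one two-dimensional subspace at a time, and then to combine this with the classical Jordan--von Neumann characterization of inner product spaces by the parallelogram identity.

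For the forward implication, I would fix an arbitrary two-dimensional subspace $Y$ of $X$ and note that when $X$ is a Hilbert space, $Y$ inherits the inner product structure and becomes a two-dimensional Hilbert space, in particular finite-dimensional and strictly convex. Theorem \ref{theorem:Hilbert-finite} applied to $Y$ then yields immediately that the pair $(Y,Y)$ has L-P property.

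For the converse, assume that the pair $(Y,Y)$ has L-P property for every two-dimensional subspace $Y$ of $X$. I would first verify that each such $Y$ is itself strictly convex, which follows painlessly from the identity $B_Y = B_X \cap Y$ and the strict convexity of $X$. Since $Y$ is then a two-dimensional strictly convex Banach space with L-P property on $(Y,Y)$, Theorem \ref{theorem:Hilbert-finite} forces $Y$ to be a Hilbert space. In particular, the parallelogram law holds inside every two-dimensional subspace of $X$. Since any pair of vectors $x,y \in X$ lies in some two-dimensional subspace, the parallelogram identity holds globally in $X$, and Jordan--von Neumann concludes that $X$ is a Hilbert space.

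The structural work has already been done in Theorem \ref{theorem:Hilbert-finite}, so the argument is mostly bookkeeping; the only step I would take care with is the inheritance of strict convexity from $X$ to its two-dimensional subspaces, since this is what licenses the use of the previous theorem in the converse direction and is the one place where the hypothesis on $X$ is actually consumed.
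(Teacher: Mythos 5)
Your proposal is correct and follows essentially the same route as the paper: reduce to Theorem \ref{theorem:Hilbert-finite} on each two-dimensional subspace (noting that strict convexity passes to subspaces), and then use the classical fact that a space all of whose two-dimensional subspaces are inner product spaces is itself a Hilbert space. The only cosmetic differences are that in the forward direction you invoke Theorem \ref{theorem:Hilbert-finite} directly where the paper re-derives it from the isometry characterization of extreme contractions on finite-dimensional Hilbert spaces, and you make the final Jordan--von Neumann step explicit where the paper merely asserts it.
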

\begin{proof}
Let us first prove the ``only if" part. If $ X $ is a Hilbert space then every two-dimensional subspace $ Y $ of $ X $ is also a Hilbert space. Again, we know that for a finite-dimensional Hilbert space $ Y, $ isometries are the only extreme contractions in $ L(Y,Y) $ and $ E_Y = S_Y. $ Thus, for every two-dimensional subspace $ Y $ of $ X, $ the pair $ (Y,Y) $ has L-P property.\\ 
Let us now prove the ``if" part. If $ X $ is a strictly convex Banach space then every two-dimensional subspace $ Y $ of $ X $ is also a strictly convex Banach space. Since the pair $ (Y,Y) $ has L-P property, then by Theorem \ref{theorem:Hilbert-finite}, $ Y $ is a Hilbert space. Thus every two-dimensional subspace of $ X $ is a Hilbert space which means that $ X $ itself has to be a Hilbert space. This establishes the theorem.\\
\end{proof}

\begin{acknowledgement}
We would like to thank an anonymous mathematician for the statements of Theorem \ref{theorem:dimension-n} and Theorem \ref{theorem:poly} in their present form.
\end{acknowledgement}

\bibliographystyle{amsplain}

\begin{thebibliography}{99}

\bibitem{G} R. Grz\c a\' slewicz,  \textit{Extreme operators on 2-dimensional lp-spaces}, Colloq. Math., \textbf{44} (1981), 309--315. 

\bibitem{Ga} R. Grz\c a\' slewicz,  \textit{Extreme contractions on real Hilbert spaces},  Math. Ann., \textbf{261} (1982), 463-466.

\bibitem{I} A. Iwanik, \textit{Extreme contractions on certain function spaces}, Colloq. Math., \textbf{40} (1978), 147--153.
 
\bibitem{K} R. V. Kadison,  \textit{Isometries of operator algebras}, Ann. of Math. (2), \textbf{54} (1951), 325-338.

\bibitem{Ki} C. W. Kim, \textit{Extreme contraction operators on $ l_{\infty} $}, Math. Z., \textbf{151} (1976), 101--110.
	
\bibitem{Li}	\AA. Lima, \textit{Intersection properties of balls in spaces of compact operators}, Ann. Inst. Fourier, \textbf{28} (1978), 35--65.
	
\bibitem{L}	\AA. Lima, \textit{On extreme operators on finite-dimensional Banach spaces whose unit balls are polytopes}, Ark. Mat., \textbf{19} (1981), no. 1, 97--116.


\bibitem{LP} J. Lindenstrauss and M. A. Perles, \textit{On extreme operators in finite-dimensional spaces}, Duke Math. J.,
  \textbf{36} (1969), 301--314.
	
\bibitem{N} M. A.  Navarro, \textit{Some characterizations of finite-dimensional Hilbert spaces}, J. Math. Anal. Appl.,
  \textbf{223} (1998), 364--365.	

\bibitem{S} D. Sain, \textit{On extreme contractions and the norm attainment set of a bounded linear operator}, Ann. Funct. Anal., Accepted, arXiv:1708.07333v1 [math.FA], 24 Aug 2017.

\bibitem{SPM} D. Sain, K. Paul and A. Mal, \textit{On extreme contractions between real Banach spaces}, arXiv:1801.09980v1 [math.FA], 30 Jan 2018.
	
\bibitem{Sh} M. Sharir, \textit{Characterization and properties of extreme operators into C(Y)}, Israel J. Math., \textbf{12} (1972), 174--183.
	
\bibitem{Sha} M. Sharir, \textit{Extremal structure in operator spaces}, Trans. Amer. Math. Soc., \textbf{186} (1973), 91--111.	



\end{thebibliography}

\end{document}